\numberwithin{equation}{section}
\newcommand{\R}{\mathbb R}
\newcommand{\T}{\mathbb T}
\def\TagOnRight
\def\R {\mathbb{R}}
\newcommand{\be}{\begin{equation}}
\newcommand{\ee}{\end{equation}}
\newcommand{\bea}{\begin{eqnarray}}
\newcommand{\eea}{\end{eqnarray}}
\newcommand{\Bea}{\begin{eqnarray*}}
\newcommand{\Eea}{\end{eqnarray*}}
\newcommand{\bt}{\begin{Theorem}}
\newcommand{\et}{\end{Theorem}}
\newcommand{\bpr}{\begin{Proposition}}
\newcommand{\epr}{\end{Proposition}}
\newcommand{\bl}{\begin{Lemma}}
\newcommand{\el}{\end{Lemma}}
\newcommand{\bi}{\begin{itemize}}
\newcommand{\ei}{\end{itemize}}
\newtheorem{Definition}{Definition}[section]
\newtheorem{Theorem}[Definition]{Theorem}
\newtheorem{Lemma}[Definition]{Lemma}
\newtheorem{Proposition}[Definition]{Proposition}
\newtheorem{Corollary}[Definition]{Corollary}
\newtheorem{Remark}[Definition]{Remark}
\begin{document}
\baselineskip16pt

\title[Composition operators on $W^{p,q}$  spaces]{Composition operators on  Wiener amalgam  spaces}
\author{Divyang G. Bhimani}
%\date{8-10-2014}
\address {Department of Mathematics, University of Maryland, College Park, MD 20742}
 \email {dbhimani@math.umd.edu}
\subjclass[2010]{Primary: 42B35, 47H30  Secondary: 42B37.}
\keywords{nonlinear operation, Wiener amalgam spaces, modulation spaces, composition operator}
\maketitle
\begin{abstract}
For  a complex function $F$ on $\mathbb C$, we study the associated composition operator $T_{F}(f):=F\circ f= F(f)$ on  Wiener amalgam  $W^{p,q}(\mathbb R^d) \ (1\leq p< \infty, 1\leq q<2).$ We have shown $T_{F} $ maps $W^{p, 1}(\mathbb R^d)$ to $W^{p,q}(\mathbb R^d)$ if and only if  $F$ is real analytic on $\mathbb R^2$ and $F(0)=0.$  Similar result is proved in the case of modulation spaces $M^{p,q}(\mathbb R^d).$ In particular, this gives an affirmative answer to the open question proposed in \cite[p.646]{dgb-pkr}
\end{abstract}
%\tableofcontents

\section{Introduction}
Let $X$ and $Y$ be  normed spaces of complex functions on $\mathbb R^d.$ For a given function $F:\mathbb R^{2}(=\mathbb C)\to \mathbb C,$ we  associate it, with the composition operator $T_{F}:f\mapsto F(f),$ where $F(f)= F\circ f$ is the composition of functions $F$ and $f:\mathbb R^d \to \mathbb C$. If $T_{F}(X) \subset Y,$ we say the composition operator $T_{F}$ takes $X$ to $Y.$ In particular, if $T_{F}(X)\subset X,$ we say the composition operator $T_{F}$ acts on $X.$  Which functions $F$ have the property that the composition operator $T_{F}$ takes $X$ to $Y$? Of course, the properties  of the operator $T_{F}$ strongly depend on $X$ and $Y.$ The aim of this paper is to take a small step toward the answer in the case of modulation and Wiener amalgam spaces (see Subsection \ref{mw}  for precise definitions).

In the last decade,  modulation  and Wiener amalgam  spaces   have turned out to be very fruitful  within pure and applied mathematics.  In fact, these spaces are nowadays present in investigations that concern problems on pseudo differential/Fourier integral operators, Strichartz estimates, and so on (we refer the reader to recent survey \cite{rsw} and the reference therein). For instance: the unimodular Fourier multiplier operator $e^{i|D|^{\alpha}}$ is not bounded on  most of the Lebesgue spaces $L^{p}(\mathbb R^{d}) \ (p\neq 2)$, in contrast it is bounded on $W^{p,q}(\mathbb R^{d})( 1\leq p, q \leq \infty)$  for $\alpha \in [0,1]$, and on $M^{p,q}(\mathbb R^{d}) (1\leq p,q \leq \infty)$ for  $\alpha \in [0,2]$ (cf. \cite{ambenyi,benyi, J1}). The cases $\alpha =1, 2$ are of particular interest because they occur in the time evolution of wave  and Schr\"odinger equations respectively.

Many mathematicians have been using these spaces as a regularity class of initial data of the Cauchy problem for nonlinear  dispersive equations (cf. \cite{bao,baob, ambenyi, benyi, E1}). 
But one of the underneath  issue  of the nonlinear dispersive equations  in the realm of  modulation and Wiener amalgam spaces is to determine,  which is the most general nonliearity one can take, which is not yet completely clear. Composition operators are simple examples of nonlinear mappings.  And when we try to study local and global well-posedness results for nonlinear dispersive equations (Schr\"odinger/wave/Klein-Gordon  etc.)  in modulation and Wiener amalgam spaces,  it is indispensable to study nonlinear mappings on it.  Taking these consideration into our account,  we are motivated to study composition operators on these spaces. Specifically, we prove following
\begin{Theorem}
\label{MT}
Let $1\leq p \leq  \infty, 1\leq  q <2,$ and  pairs 
$$(X,Y)= (M^{p,1}(\mathbb R^d), M^{p,q}(\mathbb R^d)) \ \text{or}  \   (W^{p,1}(\mathbb R^d), W^{p,q}(\mathbb R^d)).$$ Suppose that $T_{F}$ is the composition operator associated to a complex function $F$ on $\mathbb C.$   Then
\begin{enumerate}
\item \label{nc1}   If  $T_{F}$ takes $X$ to $Y,$ then $F$ must be real analytic on $\mathbb R^{2}.$ Moreover, $F(0)=0$ if $p<\infty.$
\item  \label{sc} If $F$ is real analytic on $\mathbb R^{2}$ which takes origin to itself, and $p<\infty,$ then $T_{F}$  acts on $X.$
\end{enumerate}
\end{Theorem}

\begin{Corollary} Let $1\leq p <  \infty, 1\leq  q <2,$ and  pairs 
$$(X,Y)= (M^{p,1}(\mathbb R^d), M^{p,q}(\mathbb R^d)) \ \text{or}  \   (W^{p,1}(\mathbb R^d), W^{p,q}(\mathbb R^d)).$$Then $F$ is real analytic on $\mathbb R^2$ and $F(0)=0$ if and only if $T_{F}$  takes $X$ to $Y.$ In particular, $F$ is real analytic on $\mathbb R^2$ and $F(0)=0$ if and only if $T_{F}$ acts on $X.$
\end{Corollary}
\begin{Corollary} \label{op}
 (1)There exists $f\in W^{p,1} (\R^d)$ such that $f|f|^{\alpha} \notin W^{p,q}(\R^d)$ $ 
 (1\leq p \leq \infty, 1\leq q <2)$, for any $\alpha \in (0, \infty) \setminus  2\mathbb N.$   (2)There exists $f\in M^{p,1} (\R^d)$ such that $f|f|^{\alpha} \notin M^{p,q}(\R^d)$ $ 
 (1\leq p \leq \infty, 1\leq q <2)$, for any $\alpha \in (0, \infty) \setminus  2\mathbb N.$ 
\end{Corollary}
We note that  recently Bhimani-Ratnakumar \cite[Theorem 3.9]{dgb-pkr} have proved  if $F:\mathbb R^2\to \mathbb C$ is real analytic and $F(0)=0$, then $T_{F}$ acts on  $X=M^{1,1}(\mathbb R^d), d\geq 1,$ and this  result is generalized by Kobayashi-Sato \cite[Theorem 1.1]{ks} for $X=M^{p,1}(\mathbb R)$ or $W^{p,1}(\mathbb R)$ for $1<p<\infty,$ although it is restricted to the case when $d=1.$   Thus we remark that  our Theorem \ref{MT} \eqref{sc} settles the case when  $d>1,$ and gives an affirmative answer to the open question proposed in \cite[p.646]{dgb-pkr}.  It is  also worth noting following:
\begin{Remark}\noindent
\begin{enumerate}
\item  In  Theorem \ref{MT}\eqref{nc1} conditions on range of $q\in[1, 2)$ is sharp in the sense that if we take $q=2$ then the same conclusion may not hold. (Since $W^{2,2}(\mathbb R^d)= L^2(\mathbb R^d),$ if $T_F: W^{p,1}(\mathbb R^d) \to W^{p,2}(\mathbb R^d),$ then $F$ may not be real analytic on $\mathbb R^2$.) 

\item Theorem 3.2  of Bhimani-Ratnakumar \cite{dgb-pkr} is a particular case of Theorem 1.2 (1) as $M^{p,1}(\mathbb R^{d})$ is a proper subclass of $M^{p,q}(\mathbb R^{d}) (1<q<2).$

\item  In view of Corollary \ref{op} we can point out, the standard method for evolving nonlinear dispersive equations, with the nonlinearity $f|f|^{\alpha} \ (\alpha \in (0, \infty) \setminus  2\mathbb N),$ which is of great importance in application, is ruled out.\\  
 
\end{enumerate}
\end{Remark}

The sequel contains  required notations and  preliminary  in Section 2, proof for the necessary condition
of Theorem \ref{MT} \eqref{nc1}  in Section 3, proof for the sufficient condition of Theorem \ref{MT} \eqref{sc}, and concluding remarks in Section 4.
\section{Notations and Preliminaries}  
\subsection{Notations} The notation $A \lesssim B $ means $A \leq cB$ for a some constant $c > 0 $, whereas $ A \asymp B $ means $c^{-1}A\leq B\leq cA $ for some $c\geq 1$. The symbol $A_{1}\hookrightarrow A_{2}$ denotes the continuous embedding  of the topological linear space $A_{1}$ into $A_{2}.$ The mixed  $L^{p,q}(\mathbb R^{d}\times \mathbb R^{d})$ norm is denoted by 
 $$\|f\|_{L^{p,q}}=\left( \int_{\mathbb R^{d}} \left ( \int_{\mathbb R^{d}} |f(x,w)|^{p} dx \right)^{q/p} dw\right )^{1/q} \ \  (1\leq p, q < \infty),$$
the $L^{\infty}(\mathbb R^{d})$ norm  is $\|f\|_{L^{\infty}}= \text{ess.sup}_{ x\in \mathbb R^{d}}|f(x)| $,  the $\ell^{q}(\mathbb Z^{d})$ norm is $\|a_{n}\|_{\ell^{q}}= \left(\sum_{n\in \mathbb Z^{d}} |a_{n}|^{q}\right )^{1/q}$. We denote $d-$dimensional torus by $\mathbb T^d \equiv [0, 2\pi)^d,$ and $L^{p}(\mathbb T^d)-$norm is denoted by
$$\|f\|_{L^{p}(\mathbb T^d)}=\left( \int_{[0, 2\pi)^d} |f(t)|^p dt \right)^{1/p}.$$
The space of smooth functions on $\mathbb R^d$ with compact support is denoted by $C_{c}^{\infty}(\mathbb R^d)$,  the Schwartz class is $\mathcal{S}(\mathbb R^{d})$ (with its usual topology), the space of tempered distributions is $\mathcal{S'}(\mathbb R^{d}).$ For $x=(x_1,\cdots, x_d), y=(y_1,\cdots, y_d) \in \mathbb R^d, $ we put $x\cdot y = \sum_{i=1}^{d} x_i y_i.$
Let $\mathcal{F}:\mathcal{S}(\mathbb R^{d})\to \mathcal{S}(\mathbb R^{d})$ be the Fourier transform  defined by  
\begin{eqnarray}
\mathcal{F}f(w)=\widehat{f}(w)=\int_{\mathbb R^{d}} f(t) e^{- 2\pi i t\cdot w}dt, \  w\in \mathbb R^d.
\end{eqnarray}
Then $\mathcal{F}$ is a bijection  and the inverse Fourier transform  is given by
\begin{eqnarray}
\mathcal{F}^{-1}f(x)=f^{\vee}(x)=\int_{\mathbb R^{d}} f(w)\, e^{2\pi i x\cdot w} dw,~~x\in \mathbb R^{d},
\end{eqnarray}
and this Fourier transform can be uniquely extended to $\mathcal{F}:\mathcal{S}'(\mathbb R^d) \to \mathcal{S}'(\mathbb R^d).$   For $s\in \mathbb R, w\in \mathbb R^{d},$ we put $ \langle w \rangle^{s} = (1+|w|^{2})^{s/2}.$

\subsection{Modulation and Wiener amalgam spaces}\label{mw}
Let $g\in \mathcal{S}(\mathbb R^{d})$ be a non-zero window function. The short-time Fourier transform  (STFT) of a function(tempered distribution) $f$ with respect to a  window window  $g$ is 
\begin{eqnarray}\label{stft}
V_{g}f(x,w):= \int_{\mathbb R^{d}} f(t) \overline{g(t-x)} e^{-2\pi i w\cdot t}dt,  \  (x, w) \in \mathbb R^{2d}
\end{eqnarray}
 whenever the integral exists.
 
 In 1983 Feichtinger \cite{HG4} introduced a class of Banach spaces, which allow a
measurement of space variable and Fourier transform variable of a function or
distribution $f$ on $\mathbb R^d$ simultaneously using the STFT, the so-called modulation spaces.
\begin{Definition}[modulation spaces]
 \label{df2}
 For $1\leq p, q \leq \infty,$ and  for given a non zero smooth rapidly decreasing function $g\in \mathcal{S}(\mathbb R^{d})$, the weighted modulation space $M^{p,q}_{s} (\mathbb R^{d})$ consists  of all tempered distributions  $f\in \mathcal{S}'(\mathbb R^{d})$ for which,  the following norm 
$$\|f\|_{M^{p,q}_{s}}=\left(\int_{\mathbb R^{d}}\left(\int_{\mathbb R^{d}} |V_{g}f(x,w)|^{p} dx\right)^{q/p} \langle w \rangle^{sq}dw\right)^{1/q}$$
is finite, with the usual modification if $p$ or $q$ are infinite.
 \end{Definition}
 This definition is independent of the choice of the window, in the sense that different window functions yield equivalent modulation space norms (cf. \cite[Proposition 11.3.2 (c), p.233]{gro}). When $s=0,$ we simply write $ M^{p,q}_{0}(\mathbb R^{d})=M^{p,q}(\mathbb R^{d}).$\\
By reversing the order of integration we define the another family of spaces, so-called Wiener amalgam spaces.

\begin{Definition}[Wiener amalgam spaces]
\label{df1}
For $1\leq p, q \leq \infty, s\in \mathbb R$ and $0\neq g\in \mathcal{S}(\mathbb R^{d})$, the weighted Wiener amalgam space $W_{s}^{p,q}(\mathbb R^{d})$ consists of all tempered distributions $f\in \mathcal{S'}(\mathbb R^{d})$  such that the norm
\begin{eqnarray}
\|f\|_{W^{p,q}_{s}}=\left(\int_{\mathbb R^{d}} \left(\int_{\mathbb R^{d}}|V_{g}f(x,w)|^{q}\langle w \rangle^{sq} dw \right)^{p/q} dx \right)^{1/p}\nonumber
\end{eqnarray}
is finite, with usual modifications if $p$ or $q=\infty$.
\end{Definition}
This definition is independent of the choice of the window $g$, in the sense that different window functions yields equivalent Wiener amalgam  space norms. When $s=0,$ we simply write $W^{p,q}_{0}(\mathbb R^{d})=W^{p,q}(\mathbb R^{d}).$\\

We note that there is another characterization \cite{tribel} of Wiener amalgam and modulation spaces: let $\phi \in \mathcal{S}(\mathbb R^{d})$ such that 
$$\text{supp} \phi \subset (-1,1)^{d}$$ and $$\sum_{k\in \mathbb Z^{d}} \phi (w-k)=1, \forall w \in \mathbb R^{d}.$$Then we have the equivalence 
$$\|f\|_{W_{s}^{p,q}}\asymp \| \|\langle k \rangle^{s} \phi (D-k)f \|_{\ell^{q}} \|_{L^{p}},$$ 
and
$$\|f\|_{M_{s}^{p,q}}\asymp \| \|\langle k \rangle^{s} \phi (D-k)f \|_{L^{p}} \|_{\ell^{q}},$$ 
where $\phi(D-k)f= \mathcal{F}^{-1}(\widehat{f}\cdot T_{k}\phi).$ \\
 \subsection{Properties of modulation and Wiener amalgam spaces}
We gather some basic properties of Wiener amalgam  and modulation spaces which will be frequently used in the sequel. 
\begin{Lemma}\label{pl} Let $p,q, p_{i}, q_{i}\in [1, \infty]$  $(i=1,2).$
\begin{enumerate}
\item \label{sce} $\mathcal{S}(\mathbb R^{d}) \hookrightarrow W^{p,q }(\mathbb R^{d}) \hookrightarrow \mathcal{S'}(\mathbb R^{d})$ and $\mathcal{S}(\mathbb R^{d}) \hookrightarrow M^{p,q }(\mathbb R^{d}) \hookrightarrow \mathcal{S'}(\mathbb R^{d}).$
\item \label{tz} $W^{p,q_{1}}(\mathbb R^{d}) \hookrightarrow L^{p}(\mathbb R^{d}) \hookrightarrow W^{p,q_{2}}(\mathbb R^{d})$ and $M^{p,q_{1}}(\mathbb R^{d}) \hookrightarrow L^{p}(\mathbb R^{d}) \hookrightarrow M^{p,q_{2}}(\mathbb R^{d})$ holds for $q_{1}\leq \text{min} \{p, p'\}$ and $q_{2}\geq \text{max} \{p, p'\}$ with $\frac{1}{p}+\frac{1}{p'}=1.$
\item \label{incl} If $q_{1}\leq q_{2}$ and $p_{1}\leq p_{2}$, then $W^{p_{1}, q_{1}}(\mathbb R^{d}) \hookrightarrow W^{p_{2}, q_{2}}(\mathbb R^{d})$ and $M^{p_{1}, q_{1}}(\mathbb R^{d}) \hookrightarrow M^{p_{2}, q_{2}}(\mathbb R^{d}).$
\item \label{rmw}$M^{p,q}(\mathbb R^{d}) \hookrightarrow W^{p,q}(\mathbb R^{d})$ when $q\leq p$ and $W^{p,q}(\mathbb R^{d}) \hookrightarrow M^{p,q}(\mathbb R^{d})$ when $p\leq q.$
\item  \label{fti} The space $W^{p,p}(\mathbb R^{d})=M^{p,p}(\mathbb R^{d}) (1\leq p < \infty)$ is invariant under Fourier transform.
\item \label{bsmw} The spaces $W^{p,q}(\mathbb R^{d})$ and $M^{p,q}(\mathbb R^{d})$ are Banach space.
\label{uf}\item The spaces  $W^{p,q}(\mathbb R^{d})$ and $M^{p,q}(\mathbb R^{d})$ are invariant under complex conjugation.

\item \label{da}$\mathcal{S}(\mathbb R^d)$ is dense in $M^{p,q}(\mathbb R^d)$ and $W^{p,q}(\mathbb R^d)$ for $p,q \in [1, \infty).$
\item \label{dp} Let  $0<\lambda \leq 1,$ and put $f_{\lambda}(x)=f(\lambda x).$ There exists constants  $C$ and $C'$ such that  $\|f_{\lambda}\|_{M^{\infty,1}} \leq C \|f\|_{M^{\infty,1}}$ for $f\in M^{\infty,1}(\mathbb R^d).$ 
\end{enumerate}
\end{Lemma}
\begin{proof}
The proof of statements  \eqref{tz} and  \eqref{incl} can be found in \cite{nonom} and \cite{moT} respectively.  For the proof of statement \eqref{dp}, see \cite{s1}.  For the  proof of statements \eqref{incl}, \eqref{bsmw}, and \eqref{da},  see \cite{gro}. We only give the arguments for the statement (6) because it provide the reader with some insight about the fundamental identity of time-frequency analysis: in fact, easy computation gives the fundamental identity
$$ V_gf(x, w) = e^{-2 \pi i x \cdot w } \, V_{\widehat{g}} \widehat{f}(w, -x)$$
but this immediately gives a proof of (6). 
\end{proof}

\begin{Proposition}[Algebra property]\label{ap} Let $p,q, p_{i}, q_{i}\in [1, \infty]$  $(i=0,1,2)$  satisfy
$\frac{1}{p_1}+ \frac{1}{p_2}= \frac{1}{p_0}$ and $\frac{1}{q_1}+\frac{1}{q_2}=1+\frac{1}{q_0}. $ Then
\begin{enumerate}
\item $M^{p_1, q_1}(\mathbb R^{d}) \cdot M^{p_{2}, q_{2}}(\mathbb R^{d}) \hookrightarrow M^{p_0, q_0}(\mathbb R^{d})$ with norm inequality $$\|f g\|_{M^{p_0, q_0}}\lesssim \|f\|_{M^{p_1, q_1}} \|g\|_{M^{p_2,q_2}}.$$
In particular, $M^{p,1}(\mathbb R^d)$ is an algebra under pointwise multiplication with norm inequality
$$\|fg\|_{M^{p,1}} \lesssim \|f\|_{M^{p,1}} \|g\|_{M^{p,1}}.$$
\item \label{aws} $W^{p_1, q_1}(\mathbb R^{d}) \cdot W^{p_{2}, q_{2}}(\mathbb R^{d}) \hookrightarrow W^{p_0, q_0}(\mathbb R^{d})$ with  norm inequality $$\|f g\|_{W^{p_0, q_0}}\lesssim \|f\|_{W^{p_1, q_1}} \|g\|_{W^{p_2,q_2}}.$$In particular, $W^{p,1}(\mathbb R^d)$ is an algebra under pointwise multiplication with norm inequality
$$\|fg\|_{W^{p,1}} \lesssim \|f\|_{W^{p,1}} \|g\|_{W^{p,1}}.$$
\end{enumerate}
\end{Proposition}
\begin{proof}
 cf. \cite{bao}, \cite[Corollary 2.7]{ambenyi}, and  \cite[Lemma 2.2]{E1}.
\end{proof}
We refer to \cite{gro} for a classical foundation of these spaces and \cite{baob} for some recent developments in PDEs for these spaces and the references therein. 

\section{necessary condition}
In this section, we prove Theorem \ref{MT} \eqref{nc1}: if the composition operator  $T_{F}$  takes Wiener amalgam spaces $W^{p,1}(\mathbb R^{d})$ to $W^{p,q}(\mathbb R^{d}),$ then, necessarily, $F$ is real analytic on $\mathbb R^{2}.$ And also a similar necessity condition for modulation spaces. We start with following:
\begin{Definition}
\label{red}
A complex valued function $F,$ defined on  an open set  $E$ in the plane $\mathbb R^{2}$, is said to be real analytic on $E$, if to every point $(s_{0}, t_{0}) \in E,$ there corresponds an expansion of the form
$$F(s, t)= \sum_{m,n=0}^{\infty} a_{mn} \, (s-s_{0})^{m} \, (t-t_{0})^{n}, \hskip.1in a_{mn} \in \mathbb C$$ 
which converges absolutely for all $(s,t)$ in some neighbourhood of $(s_{0}, t_{0}).$
If $E=\mathbb R^{2}$ and the above series converges absolutely for all $(s,t) \in \mathbb R^2$, then $F$ is called real entire.
\end{Definition}
We let $A^{q}(\mathbb T^{d})$ be the class of all complex functions $f$ on the $d-$torus $\mathbb T^d$ whose Fourier coefficients
 $$\widehat{f}(m)=\int_{\mathbb T^d}f(x)e^{-2\pi i m\cdot x} dx,   \ (m\in \mathbb Z^{d})$$
satisfy the condition
$$\|f\|_{A^{q}(\mathbb T^{d})}:=\|\widehat{f}\|_{\ell^{q}}<\infty.$$

We recall, the classical theorem of Katznelson \cite[p.156]{rudin}, see also, \cite[Theorem 6.9.2]{rb} for $A^{1}(\mathbb T)$ which have been proved in 1959, and later generalized by Rudin \cite {W1} in 1962 for $A^{q}(G)$, where $G$ is infinite compact  abelian  group and $1<q<2.$ We just rephrased it here by combining both of them  as required in our context.
\begin{Theorem}[ Katznelson-Rudin ]
\label{HKKR}
Suppose that $T_{F}$ is the composition operator associated to a complex function $F$ on $\mathbb C,$ and $1\leq q<2.$
If $T_{F}$ takes $A^{1}(\mathbb T^{d})$ to $A^{q}(\mathbb T^{d}),$ then $F$ is real analytic on $\mathbb R^{2}.$
\end{Theorem}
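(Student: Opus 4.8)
The statement is a rephrasing of the classical Katznelson--Rudin theorem on functions operating on the Fourier algebra, so the plan is to reproduce the structure of that circle of arguments. Identify $\mathbb{R}^{2}$ with $\mathbb{C}$ and write the candidate expansion of $F$ near a point $w_{0}$ in the mixed form $F(w_{0}+z)=\sum_{j,l} b_{jl}\,z^{j}\bar z^{\,l}$; real analyticity at $w_{0}$ is then equivalent to an estimate $|b_{jl}|\le C R^{j+l}$ for the coefficients attached to that point. Since the constants lie in $A^{1}(\mathbb{T}^{d})$, translating $F$ reduces matters to a single base point, say $w_{0}=0$ with $F(0)=0$. The first step is a boundedness reduction: as $T_{F}$ is defined on the whole Banach space $A^{1}(\mathbb{T}^{d})$ and takes values in $A^{q}(\mathbb{T}^{d})$, a Baire category argument yields $\delta,M>0$ with $\|F(f)\|_{A^{q}}\le M$ whenever $\|f\|_{A^{1}}\le\delta$. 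A perturbation and difference-quotient argument, using that $A^{1}$ is a Banach algebra (Proposition \ref{ap}) and contains the trigonometric polynomials, then shows $F\in C^{\infty}$, so that the coefficients $b_{jl}$ at least exist.

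The heart of the matter is to turn the scalar bound $\|F(f)\|_{A^{q}}\le M$ into geometric decay of the $b_{jl}$, and the key device is an infinite dissociate (Sidon) set of frequencies $\{\lambda_{k}\}\subset\mathbb{Z}^{d}$ (for instance a Hadamard-lacunary sequence placed along one coordinate axis). I would test $T_{F}$ on functions
$$ f(x)=\sum_{k=1}^{N} z_{k}\,e^{2\pi i \lambda_{k}\cdot x}, \qquad z_{k}\in\mathbb{C}, $$
for which $\|f\|_{A^{1}}=\sum_{k}|z_{k}|$. Expanding $F(f)=\sum_{j,l} b_{jl}\,f^{j}\,\bar f^{\,l}$, each monomial $f^{j}\bar f^{\,l}$ is a sum of exponentials with frequencies $\lambda_{k_{1}}+\cdots+\lambda_{k_{j}}-\lambda_{k'_{1}}-\cdots-\lambda_{k'_{l}}$; a sufficiently dissociate set makes the frequencies coming from distinct monomials distinct, so the Fourier coefficient of $F(f)$ at a prescribed such frequency equals $b_{jl}$ times an explicit product of the $z_{k}$ and a multinomial factor. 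Hence the single inequality $\|F(f)\|_{A^{q}}\le M$ simultaneously encodes a whole family of weighted lower bounds on the $b_{jl}$.

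Optimizing the number $N$ and the weights $z_{k}$, subject to $\sum_{k}|z_{k}|\le\delta$, against these coefficient identities is what produces $|b_{jl}|\le C R^{j+l}$ and hence the absolutely convergent series demanded by Definition \ref{red}. The role of the hypothesis $1\le q<2$, and the step I expect to be the main obstacle, is exactly this optimization: for $q<2$ the $\ell^{q}$-summability implicit in the finiteness of $\|F(f)\|_{A^{q}}$ is a strong enough constraint to force geometric decay of the coefficients, whereas at $q=2$ one only has $\ell^{2}$ (Plancherel) control, which is too weak and does admit non-analytic operating functions. Executing the dissociate-set combinatorics so that the monomials genuinely separate, and keeping the constant $C$ and radius $R$ uniform as $j+l\to\infty$, is the technical core. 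Once analyticity is established at every base point, the global conclusion on $\mathbb{R}^{2}$ follows from the translation reduction, and the passage to general $d$ is harmless because $\mathbb{Z}^{d}$ contains the required infinite dissociate sets.
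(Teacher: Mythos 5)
You should first be aware that the paper contains no proof of Theorem \ref{HKKR} at all: it is imported as a classical black box, attributed to Katznelson (the Helson--Kahane--Katznelson--Rudin paper, for $A^{1}$) and to Rudin's 1962 strong converse of the Wiener--L\'evy theorem (for $A^{q}$, $1<q<2$, on infinite compact abelian groups), and the author states it is ``just rephrased'' by combining the two. So the only meaningful comparison is with those classical proofs, and at the level of architecture your outline does match them: translation to a base point, a boundedness reduction on a small ball, test functions carried by a dissociate set in $\mathbb{Z}^{d}$, extraction of coefficient information from Fourier coefficients at dissociate-sum frequencies, and the correct explanation of why $q<2$ is essential (at $q=2$ Parseval gives $A^{2}(\mathbb{T}^{d})=L^{2}(\mathbb{T}^{d})$, on which every continuous $F$ operates, so the conclusion fails there).

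As a proof, however, the proposal has genuine gaps beyond the one you flag yourself. The central one is circularity: you expand $F(f)=\sum_{j,l}b_{jl}f^{j}\bar f^{\,l}$ and then read off Fourier coefficients, but the validity of such an expansion is precisely the analyticity to be proved; even granting your claim that $F\in C^{\infty}$ (itself asserted, not proved --- ``difference quotients in the Banach algebra'' give no regularity of $F$ as a function on $\mathbb{C}$ without further work), the Taylor series of a smooth function need not converge to the function, so there is nothing to expand. The classical arguments avoid this by expressing $\widehat{F(f)}(\gamma)$ at a dissociate-sum frequency directly as a multilinear average (a finite-difference expression in $F$), and deducing smoothness \emph{and} the geometric bounds simultaneously from the $\ell^{q}$ control of those averages. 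Second, the separation claim is false as stated: monomials of different bidegree do share frequencies, since $f^{j+m}\bar f^{\,l+m}$ produces the frequency $\lambda_{k_1}+\cdots+\lambda_{k_j}-\lambda_{k'_1}-\cdots-\lambda_{k'_l}$ for every $m\ge 0$ by inserting $m$ cancelling pairs $\lambda_i-\lambda_i$; what one can extract is therefore a mixture of the coefficients $b_{j+m,\,l+m}$, and disentangling it (all indices distinct, smallness of $\delta$, a bootstrap over $(j,l)$) is part of the real work. Third, the Baire category step needs the sets $\{f:\|F(f)\|_{A^{q}}\le n\}$ to be closed in the ball of $A^{1}(\mathbb{T}^{d})$; since elements of $A^{q}(\mathbb{T}^{d})$ with $q>1$ need not be continuous, this already requires a preliminary measurability/continuity statement about $F$, which in the classical papers is a separate argument. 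Finally, the step you explicitly defer --- converting the $\ell^{q}$ bound into $|b_{jl}|\le CR^{j+l}$ uniformly in $(j,l)$ --- is the actual substance of Katznelson's and Rudin's theorems. In short, you have drawn a correct map of the classical route, but not given a proof; and since the paper only cites the result, it offers no shortcut against which your sketch could be completed.
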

Now we introduce periodic Wiener amalgam  and modulation spaces, and for this reason, first we recall some definitions, and introduce temporary notations. We are starting by noting that there is a one-to-one correspondence  between functions on $\mathbb R^{d}$ that are 1-periodic in each of the coordinate directions and functions on torus $\mathbb T^d,$ and we may identify $\mathbb T^{d}=\mathbb R^{d}/\mathbb Z^{d}$ with $[0, 1)^{d}.$ Let $\mathcal{D}(\mathbb T^d)$ be the vector space  $C^{\infty}(\mathbb T^{d})$ endowed with the usual test function topology, and let $\mathcal{D}'(\mathbb T^{d})$ be its dual, the space of distributions on $\mathbb T^{d}$. Let $\mathcal{S}(\mathbb Z^{d})$ denote the space of rapidly decaying functions $\mathbb Z^{d} \to \mathbb C.$ Let $\mathcal{F}_T:\mathcal{D}(\mathbb T^{d}) \to \mathcal{S}(\mathbb Z^{d})$  be the toroidal Fourier transform
(hence the subscript $T$ ) defined by 
$$(\mathcal{F}_Tf)(\xi):=\hat{f}(\xi)= \int_{\mathbb T^d}f(x)e^{-2\pi i \xi \cdot x} dx , \  \ (\xi \in \mathbb Z^{d}).$$
Then $\mathcal{F}_T$ is a bijection and  the inverse Fourier transform is given by
$$ (\mathcal{F}^{-1}_Tf)(x):= \sum_{\xi\in \mathbb Z^{d}} \hat{f}(\xi) e^{2\pi i \xi \cdot x}, \   \ (x\in \mathbb T^{d}),$$
and this Fourier transform is extended uniquely to $\mathcal{F}_T:\mathcal{D}'(\mathbb T^d) \to \mathcal{S}'(\mathbb Z^d)$. \\

The Wiener amalgam spaces $W^{p,q}(\mathbb T^{d})$ consists of all  $f\in \mathcal{D}'(\mathbb T^{d})$ such that 
$$\|f\|_{W^{p,q}(\mathbb T^{d})}:=\| \| \phi(D_T-k)f \|_{\ell^{q}} \|_{L^{p}(\mathbb T^{d})}<\infty,$$
and modulation spaces  $M^{p,q}(\mathbb T^d)$ consists of all $f\in \mathcal{D}'(\mathbb T^d)$
such that
$$\|f\|_{M^{p,q}(\mathbb T^{d})}:=\| \| \phi(D_T-k)f \|_{L^{p}(\mathbb T^{d})} \|_{\ell^{q}}<\infty,$$
for some $\phi$ with compact support in the discrete topology of $\mathbb Z^{d}$ ,  where $\phi(D_T-k)f= \mathcal{F}_T^{-1}\left(T_{k}\phi \cdot  \mathcal{F}_Tf\right).$

Next result ensures that Wiener amalgam and modulation spaces coincides with the classical Fourier algebra. Specifically, we have
\begin{Proposition} \label{B1}
Let $1\leq p , q \leq \infty$. Then, we have  
$$M^{p,q}(\mathbb T^{d})=W^{p,q} (\mathbb T^{d})= A^{q}(\mathbb T^{d}),$$ 
with norm inequality
$$\|f\|_{M^{p,q}(\mathbb T^d)}\asymp \|f\|_{W^{p,q}(\mathbb T^d)} \asymp \|f\|_{A^{q}(\mathbb T^d)}.$$
\end{Proposition}
\begin{proof}
 For the proof we refer to  \cite[Section 5]{moT} and \cite[Lemma 1]{ko}.
\end{proof}

We now define the local-in-time versions of the  Wiener amalgam  and modulation spaces  in the following way. Given an interval $I=[0, 1)^{d}$  we let $W^{p,q}(I)$ the restriction of $W^{p,q}(\mathbb R^{d})$ onto $I$ via 
\begin{eqnarray}\label{litwd}
\|f\|_{W^{p,q}(I)}:=\text{inf} \{\|g\|_{W^{p,q}(\mathbb R^{d})}:g=f \ \text{on} \  I \},
\end{eqnarray}
and $M^{p,q}(I)$ the restriction of $M^{p,q}(\mathbb R^{d})$ onto $I$ via
\begin{eqnarray}
\|f\|_{M^{p,q}(I)}=\text{inf} \{\|g\|_{M^{p,q}(\mathbb R^{d})}:g=f \ \text{on} \  I \}.
\end{eqnarray}

We note that B\'enyi-Oh  has proved the  ``equivalence" of the periodic function spaces ( $M^{p,q}(\mathbb T^d)$ and $W^{p,q}(\mathbb T^d)$) and their local-in-time versions (defined on  a bounded interval $I=[0, 1)^{d}$, that is $M^{p,q}(I)$ and $W^{p,q}(I)$ )  in  \cite[Appendix B]{Tbenyi} (see also \cite[Remark 3.3]{Tbenyi}) via establishing the equivalent of norms: 
\begin{equation}\label{t2c}
\|f\|_{M^{p,q}(\mathbb T^d)} \asymp \|f\|_{M^{p,q}(I)} \ \text{and} \  \|f\|_{W^{p,q}(\mathbb T^d)} \asymp \|f\|_{W^{p,q}(I)},
\end{equation}
where $ 1\leq p , q \leq \infty.$

\begin{Proposition} \label{Prop3.3}
Suppose that $T_{F}$ is the composition operator associated to a complex function $F$ on $\mathbb C,$ and $1\leq q < 2.$ If  $T_{F}$ takes $W^{p,1}(\mathbb R^{d})$ to $W^{p,q}(\mathbb R^{d})$, then $T_{F}$ takes  $A^{1}(\mathbb T^{d})$ to $A^{q}(\mathbb T^{d}).$
\end{Proposition}
\begin{proof}
Let $ f\in A^1(\mathbb T ^d).$ Then $f^{\ast}(x)=f(e^{2 \pi ix_1},..., e^{2 \pi ix_d})$ is a periodic function 
on $\mathbb R^d$ with absolutely convergent Fourier series $$f^*(x)=  \sum_{m \in \mathbb Z^d} \widehat{f} (m)\, e^{2 \pi i m \cdot x} .$$ Choose $g\in C_{c}^{\infty}(\mathbb R^d)$ such that $g\equiv1$ on $Q_d=[0, 1)^d.$ Then  we claim that
$gf^{\ast}\in W^{1,1}(\mathbb R^d) \subset W^{p,1}(\mathbb R^d).$ Once the claim is assumed, by hypothesis, we have
\begin{eqnarray}\label{rh}
F(g  f^{\ast})\in W^{p,q}(\mathbb R^d).
\end{eqnarray}
Note that if $z\in \mathbb T^d$, then $z= (e^{2\pi ix_1},... , e^{2\pi ix_d})$ for some $x=(x_1,... ,x_d) \in Q_d$, hence 
\bea\label{cnct} F(f(z))=F( f^*(x))= F(g  f^{\ast}(x)), ~ \mbox{for}~ x \in Q_d.\eea
Now if $\phi\in C_c^\infty(\mathbb T^d)$, then $ g \phi^*$ is a compactly supported smooth function on $\R^d$. Also $\phi(z) = g(x) \phi^*(x)$ for every $x \in Q_d$, as per the notation above and hence 
\bea \label{3.2} \phi(z) F(f)(z)= g(x)\phi^*(x) F(gf^*)(x),\eea for some $ x \in Q_d$.

By \eqref{3.2}, Proposition \ref{B1}, \eqref{t2c}, \eqref{litwd}, and Proposition \ref{ap}\eqref{aws},   we obtain
\begin{eqnarray}
\|\phi F(f)\|_{ A^{q}(\mathbb T^d)} & = & \| g \phi^* F(gf^{\ast})\|_{ A^{q}(\mathbb T^d)}\nonumber \\
& \asymp & \|g\phi^* F(gf^*)\|_{W^{p,q}(\mathbb T^{d})} \nonumber\\
& \asymp & \|g\phi^* F(gf^*)\|_{W^{p,q} (Q_d)}  \nonumber \\
& \lesssim & \|g \phi^{\ast} F(gf^*)\|_{W^{p,q}} \nonumber \\
& \lesssim & \|g\phi^*\|_{W^{\infty, 1}} \|F(gf^*)\|_{W^{p,q}},\nonumber 
\end{eqnarray}
which is finite for every smooth cutoff function $\phi$ supported on $Q_d$ in view of Lemma \ref{pl} \eqref{sce}, and \eqref{rh}.  Now by compactness of $\T^d$, a partition of unity argument shows that $F(f) \in A^{q}(\mathbb T^d)$.

To complete the proof, we need to prove the claim. By Lemma \ref{pl}\eqref{fti}, it is enough to show that 
$\widehat{gf^{\ast}}= \widehat{g} \ast \widehat{f^{\ast}} \in W^{1,1}(\mathbb R^d).$ 
We put, $\mu=\sum_{k\in \mathbb Z^{d}}c_{k}\delta_{k},$ where $c_{k}=\widehat{f}(k)$ and  $\delta_{k}$ is the unit Dirac mass at $k.$  We note that, $\mu$ is a complex Borel measure on $\mathbb R^{d},$ and the  total variation of $\mu, $ that is, $\|\mu\|= |\mu|(\mathbb R^{d})= \sum_{k\in \mathbb Z^{n}} |c_{k}|$ is finite.
We compute the Fourier-Stieltjes transform of $\mu:$
\begin{eqnarray}
\widehat{\mu}(y)  & = & \int_{\mathbb R^{d}} e^{-2\pi ix\cdot y} d\mu(x)\nonumber\\
& = & \int_{\mathbb R^{d}} e^{-2\pi ix\cdot y}(\sum_{k\in \mathbb Z^{d}} c_{k}d\delta_{k}(x))\nonumber \\
& = & \sum_{k\in \mathbb Z^{d}} c_{k} \int_{\mathbb R^{d}} e^{2\pi ix\cdot y} d\delta_{k} (x) \nonumber \\
& = & f^{\ast}(-y).\nonumber
\end{eqnarray}
So, $$\widehat{f^\ast} = \mu = \sum_{m \in \mathbb Z^d} \widehat{f} (m) \, \delta_{m}.$$ It follows that 
$$  \widehat{g} \ast \widehat{f^{\ast}}=  \sum_{m \in \mathbb Z^d} \widehat{f} (m)  \,  \widehat{g} \ast \delta_{m}
= \sum_{m \in \mathbb Z^d} \widehat{f} (m) \,  T_{m} \widehat{g} .$$
Since the translation operator $T_m$ is an isometry on $W^{1,1}(\mathbb R^d),$ it follows that the above series is absolutely convergent in $W^{1,1}(\mathbb R^d)$, and hence $\widehat{gf^{\ast}} \in W^{1,1}(\mathbb R^d)$ as claimed.
\end{proof}

\begin{proof}[Proof of Theorem \ref{MT} \eqref{nc1}]
If  $T_{F}$ takes $W^{p,1}(\mathbb R^{d})$  to $W^{p,q}(\mathbb R^{d})$, then  $T_{F}$ takes $A^{1}(\mathbb T^{d})$  to $ A^{q}(\mathbb T^{d})$  by Proposition \ref{Prop3.3}. Hence the analyticity follows from  Theorem \ref{HKKR}.

The necessity of $F(0)=0$ is obvious if $p<\infty$ and can be obtained by taking Lemma \ref{pl}\eqref{tz} into our account and testing $T_{F}$ for zero function. In fact, we can compute (see  \cite[Proof of Theorem 14]{benyi})  the STFT of  constant a function (say 1) with respect to the windowed function $g(\xi)=e^{-\pi |\xi|^2}$ can be given by
\begin{eqnarray*}\label{cstft}
|V_{g}1(x,w)|= e^{-\pi |w|^2}.
\end{eqnarray*}
From this it is clear that nonzero constant functions cannot be in $W^{p,q}(\mathbb R^d)$ if $p<\infty.$  This completes the proof of Theorem \ref{MT}\eqref{nc1} for the pair $(X,Y)= (W^{p,1}(\mathbb R^d), W^{p,q}(\mathbb R^d)).$ Taking  Proposition \ref{B1} into  account and exploiting the method  as before,   the proof of Theorem \ref{MT}\eqref{nc1}  can be obtained for the pair $(X,Y)=(M^{p,1}(\mathbb R^d), M^{p,q}(\mathbb R^d)).$
\end{proof}
\begin{proof}[Proof of Corollary \ref{op}]
The nonlinear mapping $F:\mathbb C \to \mathbb C :z\mapsto z|z|^{\alpha}$ is not real analytic on $\mathbb R^{2}$ for $\alpha \in (0, \infty) \setminus  2\mathbb N.$
\end{proof}

\section{Sufficient Conditions}
We recall that  in 1932 Wiener proved that if $F(z)=\frac{1}{z},$ then $T_{F}$ acts on $A(\mathbb T)\setminus \{0\},$ and in 1935 L\'evy generalize this result: if $F$ is real analytic on $\mathbb R^{2},$ then  $T_{F}$ acts on $A(\mathbb T).$ This is called Wiener-L\'evy \cite{wiener,levy}  theorem.  Now in this section, we shall prove Theorem \ref{MT}\eqref{sc}, and we note that our approach of proof is inspired by the Wiener-L\'evy theorem.

Unless explicitly mentioned, throughout this section we assume that  $X=M^{p,1}(\mathbb R^d)$ or $W^{p,1}(\mathbb R^d)$ with $1\leq p< \infty.$
First, we collect some technical results which should be regarded as the tool to proving  Theorem \ref{MT}\eqref{sc}. We start with following:
\begin{Definition}
Let $\phi$ be a function defined on $\mathbb R^{d}$. 
\begin{enumerate}
\item We say that $\phi$ belongs to $X$ locally at point $\gamma_{0}\in \mathbb R^{d}$ if there is a neighbourhood $V$ of $\gamma_{0}$ and a function $h\in X$ such that $\phi(\gamma)= h(\gamma)$ for every $\gamma \in V.$
\item We say that $\phi$ belongs to $X$ locally at $\infty$ if there is a compact set $K\subset \mathbb R^{d}$ and  a function $h\in X$ such that $\phi(\gamma)=h(\gamma)$ in the complement of $K.$
\end{enumerate} 
\end{Definition}
The next lemma gives the useful criterion for functions to be in $X.$
\begin{Lemma}
\label{l2g}
If $\phi$ belongs to $X$ locally at every point of  $\mathbb R^{d} \cup \{\infty \} $, then $\phi \in X.$
\end{Lemma}
To prove Lemma \ref{l2g} and for the sake of the convenience of reader first we recall following two lemmas:  

\begin{Lemma}[\textbf{The $C^{\infty}$ Urysohn Lemma}] 
\label{urysohn}
If $K\subset \mathbb R^{d}$ is comapct and $U$ is an open set containing $K,$ there exists $f\in C_{c}^{\infty}(\mathbb R^{d})$ such that $0\leq f \leq 1, f=1$ on $K,$ and $\text{supp}(f)\subset U.$ (For the proof see \cite[p. 245]{fol}). 
\end{Lemma}

\begin{Lemma}
\label{fpde}
Suppose $K\subset \mathbb R^{d}$ be compact and let $V_{1},...,V_{n}$ be open sets with $K\subset \cup_{j=1}^{n} V_{j}.$ Then there exist open sets $W_{1}, W_{2},..., W_{n}$ with $\overline{W_{j}}\subset V_{j}$ and $K\subset \cup_{j=1}^{n}W_{j}.$
\end{Lemma}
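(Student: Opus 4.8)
The statement to prove is the finite shrinking-cover lemma: given a compact $K \subset \mathbb{R}^d$ covered by finitely many open sets $V_1, \dots, V_n$, we can find open sets $W_j$ with $\overline{W_j} \subset V_j$ whose union still covers $K$. The natural approach is to combine the local compactness of $\mathbb{R}^d$ with a straightforward induction on $n$, shrinking one cover element at a time while maintaining that the collection continues to cover $K$.

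\textbf{Key steps.} First I would record the basic building block: every point $x \in K$ lies in some $V_j$, and by local compactness of $\mathbb{R}^d$ there is an open set $O_x \ni x$ with $\overline{O_x}$ compact and $\overline{O_x} \subset V_j$ (e.g. take a small open ball around $x$ whose closure sits inside $V_j$). Second, rather than induct directly, the cleanest route is to fix the index $j=1$ and consider the compact set $K \setminus (V_2 \cup \dots \cup V_n)$, which is a closed subset of $K$ hence compact, and which is contained in $V_1$ (since the $V_j$ cover $K$). Applying the preceding observation (or the $C^\infty$ Urysohn Lemma \ref{urysohn} together with the open set $\{f > 0\}$, or normality of $\mathbb{R}^d$), I produce an open $W_1$ with $K \setminus (V_2 \cup \dots \cup V_n) \subset W_1$ and $\overline{W_1} \subset V_1$. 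Then $K \subset W_1 \cup V_2 \cup \dots \cup V_n$, so I have shrunk $V_1$ while preserving the covering property. Third, I iterate this construction over $j = 2, \dots, n$, at each stage replacing $V_j$ by an open $W_j$ with $\overline{W_j} \subset V_j$ and maintaining that $K \subset W_1 \cup \dots \cup W_j \cup V_{j+1} \cup \dots \cup V_n$. After $n$ steps the conclusion $K \subset \bigcup_{j=1}^n W_j$ with $\overline{W_j} \subset V_j$ follows.

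\textbf{Main obstacle.} There is no serious obstacle here; the only point requiring care is the inductive bookkeeping, namely verifying at each stage that the set $K \setminus \big( W_1 \cup \dots \cup W_{j-1} \cup V_{j+1} \cup \dots \cup V_n \big)$ is compact and contained in $V_j$, so that the shrinking step can legitimately be applied. Compactness follows because it is a closed subset of the compact set $K$; containment in $V_j$ follows from the induction hypothesis that the partial collection covers $K$. The separation of a compact set from the complement of an open set containing it — producing $W_j$ with $\overline{W_j} \subset V_j$ — is exactly what normality of $\mathbb{R}^d$ (or Lemma \ref{urysohn}) supplies, so this is the one place I invoke an external fact rather than a direct construction.
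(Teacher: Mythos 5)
Your proof is correct, but it takes a genuinely different route from the paper's. The paper exploits the metric structure of $\mathbb{R}^d$ directly: for $\epsilon>0$ it defines $V_j^{\epsilon}=\{x\in V_j : \mathrm{dist}(x,\mathbb{R}^d\setminus V_j)>\epsilon\}$, notes that each $V_j^{\epsilon}$ is open with $\overline{V_j^{\epsilon}}\subset V_j$, and then asserts that $K\subset\bigcup_{j=1}^{n}V_j^{\epsilon}$ once $\epsilon$ is sufficiently small --- a Lebesgue-number-type compactness argument that shrinks all the $V_j$ simultaneously by one uniform amount. You instead run the classical inductive shrinking lemma from general topology: at stage $j$ you isolate the compact set $C_j=K\setminus\bigl(W_1\cup\dots\cup W_{j-1}\cup V_{j+1}\cup\dots\cup V_n\bigr)$, verify $C_j\subset V_j$ from the covering hypothesis, and separate $C_j$ from $\mathbb{R}^d\setminus V_j$ by normality, local compactness, or the Urysohn function (your observation that $\overline{\{f>0\}}=\mathrm{supp}(f)\subset V_j$ is fine). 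Your argument is longer and needs the bookkeeping you describe, but it never touches the metric, so it proves the lemma in any normal or locally compact Hausdorff space --- mildly relevant to the paper's closing question about locally compact groups. The paper's argument buys brevity and a single shrinking parameter, at the price of being tied to a metric; note also that its final step (``if $\epsilon$ is sufficiently small'') silently hides a compactness argument of essentially the same kind you make explicit, e.g.\ that the continuous function $x\mapsto\max_j \mathrm{dist}(x,\mathbb{R}^d\setminus V_j)$ attains a positive minimum on $K$.
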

\begin{proof}
For each $\epsilon > 0$ let $V_{j}^{\epsilon}$ be the set of points in $V_{j}$ whose distance from $\mathbb R^{d}\setminus V_{j} $ is greater than $\epsilon.$ Clearly $V_{j}^{\epsilon}$ is open and $\overline{V_{j}^{\epsilon}}\subset V_{j}.$ It follows that $K\subset \cup_{1}^{n}V_{j}^{\epsilon}$ if $\epsilon$ is sufficiently small.
\end{proof}
Now we shall  prove Lemma \ref{l2g}:
\begin{proof}[Proof of Lemma \ref{l2g}]
Suppose first that $\phi$ has a compact support $K.$ By hypothesis, it follows that, for any $\gamma \in K,$ there is a neighbourhood of $\gamma,$ say $V_{\gamma},$ and $h_{\gamma} \in W^{p,1}(\mathbb R^{d})$ such that, $\phi(x)= h_{\gamma}(x)$ for all $x\in V_{\gamma}.$ Next, we observe that, $\{V_{\gamma}: \gamma \in K \}$ forms an open cover of $K,$ since $K$ is compact, 
there  exist  open sets $V_{\gamma_{1}},..., V_{\gamma_{n}}$ and functions $h_{1},...,h_{n}\in W^{p,1}(\mathbb R^{d})$ such that $\phi= h_{i}$ in $V_{\gamma_{i}}$  and $V_{\gamma_{1}}\cup V_{\gamma_{2}} \cup...\cup V_{\gamma_{n}}$ covers $K,$ 
that is, $K\subset \cup_{j=1}^{n} V_{\gamma_{j}}.$
Then by Lemma \ref{fpde}, we have 

(i) open sets $W_{1},..., W_{n}$ with compact closures $\overline{W_{j}} \subset V_{\gamma_{j}}$ such that $W_{1}\cup...\cup W_{n}$ covers $K,$ that is, $K\subset \cup_{j=1}^{n}W_{j};$
and by Lemma \ref{urysohn}, we get

(ii) functions $k_{j}\in W^{p,1}(\mathbb R^{d})$ such that $k_{j}=1$ on $\overline{W_{j}}$ and $k_{j}=0$ out side $V_{\gamma_{j}}.$\\
\noindent
Now, by using (i) and (ii), we have,  $\phi(x) k_{j}(x)=h_{j}(x)k_{j}(x),$ for all $x\in \mathbb R^{d}$ and by Proposition \ref{ap}, we get, $h_{j}k_{j}\in W^{p,1}(\mathbb R^{d})$, and so $\phi k_{j}\in W^{p,1}(\mathbb R^{d}),$  
for $1\leq j \leq n.$ Therefore, if we  put
\begin{eqnarray}
\label{bs}
\psi=\phi\{1-(1-k_{1}) (1-k_{2})...(1-k_{n})\}
\end{eqnarray}
it follows that $\psi\in W^{p,1}(\mathbb R^{d}).$ 

The multiplier of $\psi$ in \eqref{bs} is 1 whenever one of $k_{i}$ is 1, and this happens at every point of $K;$ out side $K,$ $\phi=0;$  hence $\psi=\phi,$ and thus $\phi \in W^{p,1}(\mathbb R^{d}).$

In the general case, $\phi$ belongs to $W^{p,1}(\mathbb R^{d})$ locally at $\infty,$  so that there is a function $g\in W^{p,1}(\mathbb R^{d})$ which coincides with $\phi$ outside some compact subset of $\mathbb R^{d}.$ Then $\phi- g$ has compact support and belongs to $W^{p,1}(\mathbb R^{d})$ locally at every point of $\mathbb R^{d};$ by the first case, $\phi- g\in W^{p,1}(\mathbb R^{d}),$ and so $\phi \in W^{p,1}(\mathbb R^{d}).$ This completes the proof if $X=W^{p,1}(\mathbb R^d).$ The case $X=M^{p,1}(\mathbb R^d)$ can be obtained similarly.
\end{proof} 

We denote by  $X_{loc}$ the space functions that are locally in $X$ at each $\gamma_0\in \mathbb R^d.$
\begin{Lemma}[\cite{dgb-pkr},p.634] Let  $f$ be a function defined on $\mathbb R^d.$ 
\begin{enumerate}
\item $f\in X_{loc}$ if and only if $\phi f \in X$ for all $\phi \in C_c^{\infty}(\mathbb R^d).$
\item $f$ belongs to $X$ locally at $\infty$ if and only if there exists $\phi \in C_{c}^{\infty}(\mathbb R^d)$ such that $(1-\phi)f\in X.$
\end{enumerate}
\end{Lemma}
\begin{Proposition}  (\cite [Proposition 3.4]{dgb-pkr}) \label{small} 
Let $f\in M^{1,1}(\R^d )$, $x_0 \in \R^d$ and $\epsilon >0$. Then there exists a $\phi \in C_c^\infty(\R^d)$ such that  $\| \phi  \left[ f -f(x_0)  \right] \|_{M^{1,1}} < \epsilon$.  The function $\phi $ can be chosen so that $\phi \equiv 1$ in  some neighbourhood of $x_0$. 
\end{Proposition}
\begin{Proposition} Let $f\in X, x_0 \in \R^d$ and $\epsilon >0$. Then there exists a $\Phi \in C_c^\infty(\R^d)$ such that  $\| \Phi  \left[ f -f(x_0)  \right] \|_{X} < \epsilon$.  The function $\phi $ can be chosen so that $\Phi \equiv 1$ in  some neighbourhood of $x_0$. 
\end{Proposition}
\begin{proof}
Let $f\in X.$ Choose $\psi \in C_{c}^{\infty}(\mathbb R^d)$ such that $\psi\equiv 1$ in some neoghbourhood of $x_0.$  In view of $M^{1,1}(\mathbb R^d)=W^{1,1}(\mathbb R^d)$  and   Proposition \ref{ap},  we have 
\begin{eqnarray*}
\|\psi f\|_{M^{1,1}} &  \leq &  \|f\|_{Y} \|\psi\|_{M^{1,1}}\\
& \lesssim & \|f\|_{X}< \infty 
\end{eqnarray*}
where $Y=M^{\infty,1}$ if $X=M^{p,1}$ and $Y=W^{\infty,1}$ if $X=W^{p,1}.$ 
Thus $h:= \psi f \in M^{1,1}(\mathbb R^d).$
We  can now apply  Proposition \ref{small} for $h\in M^{1,1}(\mathbb R^d):$ given $\epsilon'>0,$     there exists $\phi\in C_{c}^{\infty}(\mathbb R^d)$ such that $$\|\phi (h-h(x_0))\|_{M^{1,1}}<\epsilon'$$ and $\phi \equiv 1$ in some neighbourhood of $x_0.$  Now define $\Phi (x)= \psi (x) \phi(x)$ for all $x\in \mathbb R^d.$  Note that $\Phi \in C_{c}^{\infty}(\mathbb R^d)$ and $\Phi \equiv 1$ on some neighbourhood of $x_0.$
By definition of $\Phi$ and Lemma \ref{pl}, we have
\begin{eqnarray*}
\|\Phi(f-f(x_0))\|_{X} & = & \|\phi(h-h(x_0))\|_{X} \\
& \leq & C \|\phi(h-h(x_0))\|_{M^{1,1}}< C\epsilon'.
\end{eqnarray*}
Taking $\epsilon'=\epsilon/C, $ we get $ \|\Phi(f-f(x_0))\|_{X} < \epsilon.$ This completes the proof.
\end{proof}
\begin{Proposition}\label{kp} Let $ f\in X,$ and $\epsilon>0.$ There also exists a $\psi \in C_c^\infty(\R^d)$ such that $\| (1-\psi) f \|_{X}<\epsilon$.
\end{Proposition}
\begin{proof} Let $f\in X,$ and $\epsilon'>0.$
By Lemma \ref{pl} \eqref{da},  there exists $g\in \mathcal{S}(\mathbb R^d)$ such that   
\begin{eqnarray}\label{lm1}
\|f-g\|_{X}< \epsilon'.
\end{eqnarray}
We recall the fact that for any  $g\in \mathcal{S}(\mathbb R^d),$  there exists $\lambda_0\in (0,1)$ such that
\begin{eqnarray}\label{lm2}
\|(1-\phi_{\lambda})g\|_{M^{1,1}} < \frac{\epsilon}{2}
\end{eqnarray}
for any $\lambda \in (0, \lambda_0),$ where $\phi_{\lambda}(x)= \phi(\lambda x)\in C_c^{\infty}(\mathbb R^d)$,  (see for instance the proof of \cite[Proposition 3.14]{dgb-pkr}).
We define $\psi  \in C_{c}^{\infty}(\mathbb R^d)$ such that $\psi(x):=\phi(\lambda x)$  where $\lambda \in (0, \lambda_0).$ 
By Lemma \ref{pl} and \eqref{lm2}, we  have 
\begin{eqnarray*}
\|(1-\psi)f\|_{X}  & \leq &  \|(1- \psi) (f-g)\|_{X}+ \|(1-\psi)g\|_{X}\\
& = & \|f-g-\psi (f-g)\|_{X} + \|(1-\phi_{\lambda})g\|_{X}\\
& \leq & \|f-g\|_{X} + C \|\psi\|_{Y}\|f-g\|_{X} + \|(1-\phi_{\lambda})g\|_{M^{1,1}}\\
& \leq &  (1+ C \|\phi_{\lambda}\|_{Y}) \|f-g\|_{X}+ \frac{\epsilon}{2} 
\end{eqnarray*}
 where $Y=M^{\infty,1}$  if $X=M^{p,1}$ and $Y=W^{\infty,1}$ if $X=M^{p,1}.$  By Lemma \ref{pl} \eqref{rmw} and Lemma \ref{pl}\eqref{dp}, we have
we have $\|\phi_{\lambda}\|_{Y} \lesssim \|\phi_{\lambda}\|_{M^{\infty,1}} \lesssim \|\phi\|_{M^{\infty,1}}$. Using this, we have
\begin{eqnarray}\label{lm3}
\|(1-\psi)f\|_{X} \leq (1+C'\|\phi\|_{M^{\infty,1}}) \|f-g\|_{X} +\frac{\epsilon}{2}.
\end{eqnarray}
Taking $\epsilon'=\frac{\epsilon}{2 (1+C'\|\phi\|_{M^{\infty,1}})},$ and using \eqref{lm1} and \eqref{lm2}, we obtain that $\|(1-\psi)f\|_{X}< \epsilon.$
\end{proof}
\begin{proof}[Proof of Theorem \ref{MT}\eqref{sc}]
Write $f= f_1+ if_2 \in X$, where $f_1$ and $f_2$ are real functions, and with an abuse of notation, we write $F(f) = F (f_1, f_2)$.
To show that $F(f)$ is in $X$,  enough to show, in view of Lemma \ref{l2g} that 
$F(f) \in X_{loc}$ and $F(f)$ belongs to $X$  locally at $\infty.$
First we show that $F(f) \in X_{loc}.$ 
Fix $x_{0} \in \R^d $ and put $f(x_0)= s_{0} + it_{0}$. 
Since $F$ is real analytic at $(s_0, t_0)$, there exists a $\delta >0 $ such that $F$ has the power  series expansion
\begin{eqnarray} \label{3.4}
\label{mc}
F(s,t)= F(s_{0}, t_{0}) + \sum_{m,n=0}^{\infty} a_{mn} (s-s_{0})^{m} (t-t_{0})^{n},~ ~(a_{00}=0)
\end{eqnarray}
which converges absolutely for $|s-s_{0}|\leq \delta, |t-t_{0}|\leq \delta.$
Then 
\bea
\label{mc2}
F(f_1(x),f_2(x)) & = & F(s_{0}, t_{0})   \nonumber \\
&&+ \sum_{(m,n)\neq (0,0) } a_{mn} [f_1(x)-f_1(x_{0}) ]^{m} [f_2(x)-f_2(x_{0})]^{n}
\eea
whenever the series converges. 

Note that both $f_1$ and $f_2$ are in $X$, being the real and imaginary part of $f$. 
Hence in view of  Proposition \ref{small},  we can find a $\phi \in C_c^\infty(\R^d)$, such that 
$\phi \equiv 1$ near $x_0$ and $\| \phi [ f_i - f_i(x_0) ]  \|_{X}< \delta $, for $i=1,2$.
Now consider the function $G$ on $\R^d$ defined by 
\bea
G(x) &=& \phi(x) \, F(s_{0}, t_{0}) \nonumber\\
&& + \sum_{(m,n)\neq (0,0) } a_{mn} \left( \phi(x)[ f_1(x)-f_1(x_{0}) ] \right) ^{m} \left( \phi(x) [f_2(x)-f_2(x_{0}) ] \right)^{n}.\eea

Since  $\| \varphi [f_i -f_i(x_0)] \|_{X}< \delta $, for $i=1,2$ and in view of Proposition \ref{ap}, we see that the above series is absolutely convergent in $X.$
Also since $\phi \equiv 1$ in some neighbourhood of $x_0$, it follows that $G \equiv F(f)$ in some  neighbourhood of $x_0$.
Since $x_0$ is arbitrary, this shows that $F(f) \in X_{loc}.$

To show that $F(f)\in X$ locally at infinity, we take $(s_0,t_0)=(0,0)$ in equation (\ref{3.4}).  Since $F({ 0})=0,$  the expansion \eqref{mc2} now becomes
\Bea
\label{mc}
F(f_1(x),f_2(x)) & = & \sum_{(m,n)\neq (0,0) } a_{mn} \, [f_1(x)] ^{m} \, [f_2(x)]^{n},
\Eea  
whenever the series converges.

By Proposition \ref{kp}, we have $\|(1-\psi) f_i \|_{X} < \delta$, for $i=1,2$ for some $\psi \in C_c^\infty(\R^d)$. 
 Now consider the function $H$ defined by
\Bea
H(x)= \sum_{(m,n)\neq (0,0) } a_{mn} \, [(1-\psi (x))f_1(x) ]^m \, [(1-\psi (x))f_2(x) ]^n.\Eea

The above series is absolutely convergent in $X$, in view of the above norm estimates, hence
$H \in X$.
Also since $\psi$ is compactly supported, $1- \psi \equiv 1 $ in the complement of a large ball centered at the  origin, hence $H =F(f)$ in the compliment of a compact set. This shows that $F(f)$ belongs to $X$  locally at infinity. 
\end{proof}
Finally, in this section we note 
\begin{Theorem}
Suppose that $T_{F}$ is the composition operator associated to a complex function $F$ on $\mathbb C,$ and $1\leq p \leq \infty.$ If $F$ is a real entire given by $F(s,t)=\sum_{m,n=0}^{\infty} a_{mn}s^{m}t^{n}$ with $F(0)=0$, then $T_{F}$ acts on $W^{p,1}(\mathbb R^{d}).$ In particular, we have 
$$\|T_{F}(f)\|_{W^{p,1}}\leq \sum_{m,n=0}^{\infty} |a_{mn}|\|f\|_{W^{p,1}}^{m+n}, (f=f_{1}+if_{2}).$$
\end{Theorem}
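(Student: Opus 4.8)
The plan is to expand the composition as a double power series in the real and imaginary parts of $f$ and to estimate each monomial by means of the multiplication-algebra structure of $W^{p,1}(\mathbb{R}^d)$ supplied by Proposition \ref{ap}. Write $f=f_1+if_2$ with $f_1=\operatorname{Re}f$ and $f_2=\operatorname{Im}f$, and regard $F$ as a map on $\mathbb R^2\cong\mathbb C$, so that
$$T_F(f)=F(f_1,f_2)=\sum_{m,n=0}^{\infty}a_{mn}\,f_1^m f_2^n,$$
where the constant term is absent since $F(0)=0$ forces $a_{00}=0$. It therefore suffices to show that this series converges absolutely in $W^{p,1}(\mathbb R^d)$ and that its sum is $F(f)$.

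The first step is to record that $f_1,f_2\in W^{p,1}(\mathbb R^d)$ with $\|f_j\|_{W^{p,1}}\le\|f\|_{W^{p,1}}$: by the invariance of $W^{p,1}$ under complex conjugation (Lemma \ref{pl}), $\overline{f}$ lies in the space with the same norm, and writing $f_1=(f+\overline{f})/2$, $f_2=(f-\overline{f})/(2i)$, the triangle inequality gives the bound. Next, for each $(m,n)$ with $m+n\ge 1$ the monomial $f_1^m f_2^n$ is a product of $m+n$ elements of the algebra, so iterating \eqref{algineq} places it in $W^{p,1}$ with
$$\|f_1^m f_2^n\|_{W^{p,1}}\le\|f_1\|_{W^{p,1}}^{m}\,\|f_2\|_{W^{p,1}}^{n}\le\|f\|_{W^{p,1}}^{m+n}.$$
To obtain this with no stray constant, as in the asserted inequality, I would first pass to the equivalent submultiplicative norm $c\,\|\cdot\|_{W^{p,1}}$, where $c$ is the constant in \eqref{algineq}; this does not affect membership and renders the algebra constant equal to $1$, and we continue to denote the resulting norm by $\|\cdot\|_{W^{p,1}}$. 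Summing and invoking that $F$ is real entire, so that its series converges absolutely at the real point $(\|f\|_{W^{p,1}},\|f\|_{W^{p,1}})$ in the sense of Definition \ref{red}, yields
$$\sum_{m,n=0}^{\infty}|a_{mn}|\,\|f_1^m f_2^n\|_{W^{p,1}}\le\sum_{m,n=0}^{\infty}|a_{mn}|\,\|f\|_{W^{p,1}}^{m+n}<\infty,$$
which is precisely the stated estimate and shows the series converges absolutely.

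Since $W^{p,1}(\mathbb R^d)$ is a Banach space (Lemma \ref{pl}), absolute convergence gives a sum $g\in W^{p,1}(\mathbb R^d)$ obeying the displayed bound. The remaining point, which I expect to require the most care, is to identify $g$ with the pointwise composition $F(f)$. For this I would use the embedding $W^{p,1}(\mathbb R^d)\hookrightarrow L^p(\mathbb R^d)$ of Lemma \ref{pl}\eqref{tz}, valid for every $p\in[1,\infty]$ since $1\le\min\{p,p'\}$: convergence of the partial sums in $W^{p,1}$ forces convergence in $L^p$, hence almost-everywhere convergence along a subsequence. As $f_1(x),f_2(x)$ are finite a.e. and the defining series of the entire function $F$ converges everywhere, the partial sums converge pointwise a.e. to $F(f_1(x),f_2(x))=F(f(x))$; thus $g=F(f)$ almost everywhere, completing the proof.
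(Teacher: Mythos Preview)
Your proof is correct and follows essentially the same route as the paper: split $f$ into real and imaginary parts, use the algebra property (Proposition~\ref{ap}) to control each monomial $f_1^m f_2^n$, and sum using absolute convergence of the entire series. If anything, you are more careful than the paper in two places it glosses over: you renormalize to absorb the implicit constant in \eqref{algineq}, and you justify the identification of the $W^{p,1}$-sum with the pointwise composition $F(f)$ via the $L^p$ embedding and an a.e.\ subsequence.
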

\begin{proof}
Let $f\in W^{p,1}(\mathbb R^{d})$ with $f_{1}= \frac{f+\bar{f}}{2}$ and $f_{2}=\frac{f-\bar{f}}{2i}$. Then $f_{1}, f_{2}\in W^{p,1}(\mathbb R^{d})$ and so $f_{1}^{m}, f_{2}^{n}\in W^{p,1}(\mathbb R^{d})$ by Proposition \ref{ap}. Since the series $\sum_{m,n=0}^{\infty} a_{mn}s^{m}t^{n}$, converges absolutely for all $(s,t),$  the series $\sum_{n,m=0}^{\infty}a_{mn} f^{m}_{1} f^{n}_{2}$ is converges in the norm of $W^{p,1}(\mathbb R^{d})$; and its sum is $F(f)=\sum_{n,m=0}^{\infty}a_{mn}f_{1}^{m}f_{2}^{m}$; and hence 
$$\|F(f)\|_{W^{p,1}}\leq \sum_{ m, n=0}^{\infty} |a_{mn}|\cdot \|f_{1}\|^{m}_{W^{p,1}} \|f_{2}\|
^{n}_{W^{p,1}}.$$\end{proof}
\section{Concluding Remarks}
By the frequency-uniform localization (see \cite[Chapter 6]{baob}) techniques, the modulation and Wiener amalgam spaces can be viewed as a Besov/Lizorkin-Triebel type space associated with a uniform decomposition (see \cite{tribel, bao}). We note that in the last two decades composition operators have been studied (by Bourdaud, Sickel et al.) extensively on Besov and Lizorkin-Triebel spaces.  We cannot hope to acknowledge here all those who made this story of a composition operator successful. And we just refer  to the enlightening survey article \cite{bs} by Bourdaud-Sickel and the references therein. Recently   \cite{rrs, kst, nonom} some progress has been made for a composition operator  on weighted modulation spaces.  But we believe, yet we have very little  information for composition operators on  modulation and Wiener amalgam spaces. Specifically, we note following:
\begin{enumerate}
\item Feichtinger \cite{HG4} have established the basic properties  Wiener amalgam spaces and modulation spaces  on locally compact groups. It would be interesting to investigate the analogue of Theorems \ref{MT}  for locally compact groups.

\item  We have answered the problem stated in introductory paragraph in a few specific cases   (Theorems \ref{MT}). What about the remaining cases? 
\item  It would be interesting to find necessary and sufficient conditions on $F:\mathbb C\to \mathbb C$ such that  the composition operator is bounded on modulation/Wiener amalgam spaces, that is,  $\|F\circ f \|_{X} \lesssim \|f\|_{X}$ for $X=M^{p,q}(\mathbb R^d)$ or $W^{p,q}(\mathbb R^d)$ ($p=q\neq 2$).
\end{enumerate}
\noindent
{\textbf{Acknowledgment}:} 
The author wishes to thank Prof. P. K. Ratnakumar for
suggesting to look at this problem and encouragement on the subject of this paper. The author  is  thankful to   IUSTF and Indo-US SERB and DST-INSPIRE, and TIFR CAM for the  support.  The author is very grateful to Professor Kasso Okoudjou for his  hospitality and   arranging  excellent research facilities at   the University of Maryland.

\end{document}